\DeclareMathOperator*{\argmin}{arg\,min}
\newcommand{\R}{\mathbb{R}}
\newcommand{\N}{\mathbb{N}}
\newtheorem{lemma}{Lemma}
\begin{document}

\title{Learning Filter Functions in Regularisers\\by Minimising Quotients}

\author[1]{Martin Benning}
\author[2]{Guy Gilboa}
\author[1]{Joana Sarah Grah}
\author[1]{Carola-Bibiane Sch\"onlieb}

\affil[1]{University of Cambridge, Department of Applied Mathematics and Theoretical Physics\\

Centre for Mathematical Sciences, Wilberforce Road, Cambridge CB3 0WA, United Kingdom\\

\url{{mb941, jg704, cbs31}@cam.ac.uk}}

\affil[2]{Technion - Israel Institute of Technology, Electrical Engineering Department\\

Technion City, Haifa 32000, Israel\\

\url{guy.gilboa@ee.technion.ac.il}}

\date{March 16, 2017}

\maketitle

\begin{abstract}
Learning approaches have recently become very popular in the field of inverse problems. A large variety of methods has been established in recent years, ranging from bi-level learning to high-dimensional machine learning techniques. Most learning approaches, however, only aim at fitting parametrised models to favourable training data whilst ignoring misfit training data completely. In this paper, we follow up on the idea of learning parametrised regularisation functions by quotient minimisation as established in \cite{PAMM}. We extend the model therein to include higher-dimensional filter functions to be learned and allow for fit- and misfit-training data consisting of multiple functions. We first present results resembling behaviour of well-established derivative-based sparse regularisers like total variation or higher-order total variation in one-dimension. Our second and main contribution is the introduction of novel families of non-derivative-based regularisers. This is accomplished by learning favourable scales and geometric properties while at the same time avoiding unfavourable ones.\\
\\
\textbf{Keywords:} Regularisation Learning, Non-linear Eigenproblem, Sparse Regularisation, Generalised Inverse Power Method
\end{abstract}

\section{Introduction}

Learning approaches for variational regularisation models constitute an active area of current research. In so-called bi-level learning approaches \cite{LearningRS,LearningKP}, for instance, one seeks to minimise a cost functional subject to a variational minimisation problem usually consisting of a data fidelity term and a regularisation term. Application of such models range from learning of suitable regularisation parameters to learning the correct operator or entire model, strongly dependent on the type of the underlying problem. In \cite{LearningRST}, the authors compared performance of Total Variation (TV), Infimal Convolution TV (ICTV) and second-order Total Generalised Variation (TGV$^2$) regularisers combined with both $L_1$ and $L_2$ cost functions for denoising of 200 images of the BSDS300 dataset measured by SSIM, PSNR and an objective value. There was no unique regulariser that always performed best. The images in the above-mentioned dataset differ from each other significantly enough such that advantages of the different regularisers become apparent for images with different prominent features such as sharp edges or piecewise linear regions. 

Another approach to variational regularisation learning is dictionary learning \cite{SparseRev}. 
In this approach the basic paradigm is that local image regions (patches) can be composed based on a linear combination of very few atoms from some dictionary. The dictionary could be global, for example wavelets or DCT-based, and in those cases a basis. However, it was revealed that tailored dictionaries to the specific image (or class of images), which are overcomplete, outperform the global dictionaries. These dictionaries are typically learned from the noisy image itself using algorithms such as K-SVD \cite{KSVD} based on orthogonal-matching-pursuit (OMP).
Recent studies have shown relations between convolutional neural nets and convolutional sparse coding \cite{SparseCoding}. 
One can conceptually perceive our proposed convolution filter set $\{h_i\}$, which defines the target-specific regulariser, as a small dictionary which is learned based on a few positive and negative image examples.

Learning approaches for variational regularisation models are aiming to design appropriate regularisation and customise it to particular structures present in the image. A somewhat separate route is the mathematical analysis of model-based regularisation, aiming at understanding the main building-blocks of existing regularisers to pave the path for designing new ones. In \cite{GroundStates}, for instance, the concept of ground states, singular values and singular vectors of regularisation functionals has been introduced, enabling the computation of solutions of variational regularisation schemes that can be reconstructed perfectly (up to a systematic bias). 

In \cite{PAMM} a new model motivated by generalised, non-linear Eigenproblems has been proposed to learn parametrised regularisation functions. As a novelty, both wanted and unwanted outcomes are incorporated in the model by integrating the former in the numerator and the latter in the denominator:
\begin{equation}
\hat{h} \in \argmin\limits_{\substack{\Vert h \Vert_2 = 1\\\text{mean}(h) = 0}} \frac{J(u^+;h)}{J(u^-;h)}, \quad J(u;h) = \Vert u \ast h \Vert_1,
\label{eq:basic}
\end{equation}
where $h$ is a parametrisation of a regularisation functional $J$ and $u^+$ and $u^-$ are desired and undesired input signals, respectively.

The basic idea underlying the model in \cite{PAMM} is optimisation of a quotient with respect to a convolution kernel parametrising certain regularisation functionals. In the paper, the authors investigate the same regularisation function, which is the one-norm of a signal convolved with a kernel $h$ both in the numerator and denominator. In the former, the input is a desirable signal, i.e.\ a function, which is preferred to be sparse once convolved with the kernel, whereas the latter is a signal to be avoided, yielding a large one-norm once convolved. In \cite{PAMM}, the undesirable signal has only been chosen to be pure noise or a noisy version of the desired input signal. In this work, however, we are also going to use clean signals as undesirable signals, with specific geometric properties or scales one wants to avoid. In Section \ref{sec:novfilters} we are going to see that this will enable us to derive tailored filters superior to those derived merely from desirable fitting data.

\section{The Proposed Learning Model}

In order to be able to incorporate multiple input functions, different regularisation functionals and multi-dimensional filter functions, we generalise the model in \cite{PAMM} as follows:
\begin{equation}
\hat{h} \in \argmin\limits_{\substack{\Vert h \Vert_2 = 1\\\text{mean}(h) = 0}} \frac{\frac{1}{M}\sum\limits_{i=1}^M\sum\limits_{k=1}^K J(u^+_i; h_k)}{\frac{1}{N}\sum\limits_{j=1}^N\sum\limits_{k=1}^K J(u^-_j; h_k)}\, , \quad J(u;h) = \Vert u \ast h \Vert_1.\label{eq:complex}
\end{equation}
Now, $\hat{h} = (\hat{h}_1, \dots, \hat{h}_K)$, where $\hat{h}_k \in \R^n$ for all $k \in \{1,\dots,K\}$, is a combination of multiple filter functions. The signals $u^+, u^- \in \R^m$ are one-dimensional or two-dimensional images written as a column vector. In the following section we want to describe how we want to solve \eqref{eq:complex} numerically.

\subsection{Numerical Implementation}
Viewing the quotients in \eqref{eq:basic} and \eqref{eq:complex} as generalised Rayleigh quotients, we observe that we deal with the (numerical) solution of generalised Eigenvalue problems. In order to solve \eqref{eq:basic} and \eqref{eq:complex} with the same algorithm, we write down an abstract algorithm for the solution of 
\begin{align} 
\hat{h} \in \argmin_{h} \left\{ \frac{F(h)}{G(h)} \quad \text{subject to} \quad \| h \|_2 = 1 \quad \text{and} \quad \text{mean}(h) = 0 \right\} \, .\label{eq:abstractprob}
\end{align}
The optimality condition of \eqref{eq:abstractprob} is given via $0 \in \partial F(\hat{h}) - \hat{\mu} \, \partial G(\hat{h})$, where $\partial F(\hat{h})$ and $\partial G(\hat{h})$ denote the subdifferential of $F$ and $G$ at $\hat{h}$, respectively, and $\hat{\mu} = F(\hat{h})/G(\hat{h})$. Note that the Lagrange multipliers for the constraints are zero by the same argumentation as in \cite[Section 2]{ISSD}, and can therefore be omitted.

In \cite{HeinBuehler} the authors have proposed a generalised inverse power method to tackle problems of the form \eqref{eq:abstractprob}. We, however, follow \cite{BLUB} and use a modification with added penalisation of the squared two-norm between $h$ and the previous iterate, to guarantee coercivity (and therefore existence and uniqueness of the solution) of the main update. The proposed algorithm for solving \eqref{eq:abstractprob} therefore reads as
\begin{align}
\begin{cases}
h^{k+\frac{1}{2}} &= \argmin\limits_{\text{mean}(h) = 0} \left\{ F(h) - \mu^k \langle h - h^k, s^k \rangle + \left\Vert h - h^k \right\Vert_2^2 \right\}\\
\mu^{k+1} &= \frac{F(h^{k + \frac{1}{2}})}{G(h^{k + \frac{1}{2}})}\\
s^{k+1} &\in \partial G(h^{k+\frac{1}{2}})\\
h^{k+1} &= \frac{h^{k+\frac{1}{2}}}{\left\Vert h^{k+\frac{1}{2}} \right\Vert_2}
\end{cases} \, .\label{eq:algorithm}
\end{align}

Similar to \cite{PAMM} we are using the CVX MATLAB\textsuperscript{\textregistered} software for disciplined convex programming \cite{CVX}. Due to the non-convexity of the overall problem and the resulting dependence on random initialisations of the filter, we re-initialise $h$ and iterate \eqref{eq:algorithm} 100 times. As also explained in \cite{PAMM}, reconstruction of a noisy signal in order to test the behaviour of the optimal filter is obtained by solving the following constrained optimisation problem:
\begin{equation}
\hat{u} = \argmin\limits_{u \in \mathbb{R}^m} J(u; \hat{h}) \qquad \text{subject to} \quad \Vert u - f \Vert_2 \leq \eta \sigma \sqrt{m}\,,
\label{eq:reconstruction}
\end{equation}
where $f$ is the sum of $u^+$ and Gaussian noise with zero mean and variance $\sigma^2$, $\eta$ is a weighting factor and $m$ is the number of elements of $u^+$.
\paragraph{Remark.}
In the setting of \cite{PAMM}, there is indeed an even more efficient way of finding suitable filter functions $h$. Simplifying the model to a variant without the need of having a negative input function $u^-$ yields the same results, which is a clear indicator that in the above-mentioned framework the numerator plays a dominant role. In fact, varying model \eqref{eq:basic} by replacing the one-norm in the denominator by $\Vert h \Vert_2$ returns exactly the same solutions. However, we would like to stress that the denominator is going to play a more important role in our extended model, since we are able to incorporate more than one input function $u^-$, especially ones which are different from pure noise. In fact, one can think of a large variety of undesired input signals such as specific textures and shapes.\\

Despite existing convergence results previously stated in \cite{HeinBuehler} and \cite{BLUB} we want to briefly state a simplified convergence result for global convergence of Algorithm \ref{eq:algorithm} in the following.

\subsection{A brief convergence analysis}
Following \cite[Section 3.2]{PALM}, we show two results that are essential for proving global convergence of Algorithm \eqref{eq:algorithm}: a descent lemma and a bound of the subgradient by the iterates gap. We start with the sufficient decrease property of the objective.
\begin{lemma}\label{lem:suffdecrease}
Let $F$ and $G$ be proper, lower semi-continuous and convex functions. Then the iterates of Algorithm \eqref{eq:algorithm} satisfy
\begin{align*}
\mu^{k + \frac{1}{2}} + \frac{1}{G(h^{k + \frac{1}{2}})}\| h^{k + \frac{1}{2}} - h^k \|^2 \leq \mu^k \, ,
\end{align*}
if we further assume $G(h^{k + \frac{1}{2}}) \neq 0$ for all $k \in \N$.
\end{lemma}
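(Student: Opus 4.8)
The plan is to run the classical ``sufficient decrease'' argument for inverse-power-type iterations: exploit the optimality of the half-step $h^{k+\frac12}$, feed in the subgradient inequality for the convex function $G$, and then substitute the Rayleigh-quotient identity $F(h^k) = \mu^k G(h^k)$. Throughout one reads $\mu^{k+\frac12} = F(h^{k+\frac12})/G(h^{k+\frac12})$ and uses that, for the $F$ and $G$ arising from \eqref{eq:complex}, values and subdifferentials are invariant under the positive rescaling $h^k = h^{k-\frac12}/\|h^{k-\frac12}\|_2$ (both functionals are positively $1$-homogeneous), so that $s^k \in \partial G(h^k)$ and $\mu^k = F(h^k)/G(h^k)$. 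One also notes $F,G\ge 0$, hence $\mu^k\ge 0$, and that $G(h^{k+\frac12})\neq 0$ means $G(h^{k+\frac12})>0$.

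First I would use that $h^k$ is feasible for the subproblem defining $h^{k+\frac12}$ (it has zero mean, being a positive multiple of the previous mean-zero half-step). Since $h^{k+\frac12}$ is the minimiser, comparing objective values at $h^{k+\frac12}$ and at $h^k$ --- where the inner-product and quadratic terms drop out --- gives
\[
F(h^{k+\frac12}) - \mu^k \langle h^{k+\frac12} - h^k, s^k\rangle + \|h^{k+\frac12} - h^k\|_2^2 \le F(h^k).
\]
(Strong convexity of the quadratically regularised objective would even give a factor $2$ on the last term, but the stated factor $1$ is all that is needed.) Next I would apply the subgradient inequality $\langle s^k, h^{k+\frac12} - h^k\rangle \le G(h^{k+\frac12}) - G(h^k)$, multiply it by $\mu^k\ge 0$, and substitute, obtaining
\[
F(h^{k+\frac12}) + \|h^{k+\frac12} - h^k\|_2^2 \le F(h^k) + \mu^k\bigl(G(h^{k+\frac12}) - G(h^k)\bigr).
\]
Finally I would cancel $F(h^k)$ against $\mu^k G(h^k)$, divide by $G(h^{k+\frac12})>0$, and identify $F(h^{k+\frac12})/G(h^{k+\frac12})$ with $\mu^{k+\frac12}$, which is exactly the claimed inequality.

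I expect the main obstacle to be nothing deep but rather the index/normalisation bookkeeping in the first paragraph: one has to be careful that the $s^k$ used in the update is genuinely a subgradient of $G$ at the current iterate $h^k$ and that $\mu^k$ is the Rayleigh quotient evaluated there, both of which hinge on the $1$-homogeneity of $F$ and $G$ (equivalently, on handling the half-step quantities consistently). Once that is set up, the three displayed steps are routine.
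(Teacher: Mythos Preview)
Your argument is correct and follows exactly the paper's own proof: compare the subproblem objective at $h^{k+\frac12}$ versus $h^k$, apply the subgradient inequality for $G$, cancel $F(h^k)-\mu^k G(h^k)$, and divide by $G(h^{k+\frac12})$. Your explicit handling of the normalisation step via $1$-homogeneity (to justify $s^k\in\partial G(h^k)$ and $\mu^k=F(h^k)/G(h^k)$) and of the sign $\mu^k\ge 0$ is in fact more careful than the paper, which uses these facts tacitly.
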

\begin{proof}
From the first equation of Algorithm \eqref{eq:algorithm} we observe
\begin{align*}
F(h^{k + \frac{1}{2}}) + \| h^{k + \frac{1}{2}} - h^k \|_2^2 &\leq F(h^k) + \mu^k \langle s^k, h^{k + \frac{1}{2}} - h^k \rangle \\
&\leq F(h^k) + \mu^k \left( G(h^{k + \frac{1}{2}}) - G(h^k) \right) \\
&= \mu^k G(h^{k + \frac{1}{2}}) \, ,
\end{align*}
due to the convexity of $G$. If we divide by $G(h^{k + \frac{1}{2}})$ on both sides of the equation, we obtain
\begin{align*}
\mu^{k + \frac{1}{2}} + \frac{1}{G(h^{k + \frac{1}{2}})}\| h^{k + \frac{1}{2}} - h^k \|^2 \leq \mu^k \, ,
\end{align*}
which concludes the proof.
\end{proof}
In order to further prove a bound of the subgradient by the iterates gap, we assume that $G$ is smooth and further has a Lipschitz-continuous gradient $\nabla G$. We want to point out that this excludes choices for $G$ such as in \eqref{eq:basic} and \eqref{eq:complex}, as the one-norm is neither smooth nor are its subgradients Lipschitz-continuous. A remedy here is the smoothing of the one-norms in \eqref{eq:basic} and \eqref{eq:complex}. If we replace the one-norm(s) in the denominator with Huber one-norms, i.e. we replace the modulus in the one-norm with the Huber function
\begin{align*}
\phi_\gamma(x) =
\begin{cases}
\frac{x^2}{2}, & \vert x \vert \leq \gamma\\
\gamma \left( \vert x \vert - \frac{\gamma}{2} \right), & \vert x \vert > \gamma
\end{cases} \, ,
\end{align*}
we can achieve smoothness and Lipschitz-continuity of the gradient, where the Lipschitz parameter depends on the smoothing parameter $\gamma$. We want to note that for $\gamma$ small enough we have not seen any significant difference in numerical performance between using the one-norm or its Huber counterpart.
\begin{lemma}\label{lem:subgradbound}
Let $F$ and $G$ be proper, lower semi-continuous and convex functions, and let $G$ be differentiable with $L$-Lipschitz-continuous gradient, i.e. $\| \nabla G(h_1) - \nabla G(h_2) \|_2 \leq L \| h_1 - h_2 \|_2$ for all $h_1$ and $h_2$ and a fixed constant $L$. Then the iterates of Algorithm \eqref{eq:algorithm} satisfy
\begin{align*}
\| r^{k + \frac{1}{2}} - \mu^{k + \frac{1}{2}} \nabla G(x^{k + \frac{1}{2}}) \|_2 \leq (2 + C^{k + \frac{1}{2}} L) \| h^{k + \frac{1}{2}} - h^k \|_2 \, ,
\end{align*}
for some constant $C^{k + \frac{1}{2}}$, $r^{k + \frac{1}{2}} \in \partial F(h^{k + \frac{1}{2}})$ and $\mu^{k + \frac{1}{2}} := \mu^{k + 1} = F(h^{k + \frac{1}{2}})/G(h^{k + \frac{1}{2}})$.
\end{lemma}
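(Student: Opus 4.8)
The plan is to adapt the corresponding estimate in \cite[Section 3.2]{PALM}: read off the first-order optimality condition of the inner minimisation in Algorithm \eqref{eq:algorithm}, substitute it into the residual $r^{k+\frac{1}{2}} - \mu^{k+\frac{1}{2}}\nabla G(h^{k+\frac{1}{2}})$, and estimate the resulting terms using the monotonicity of $\mu^k$ from Lemma \ref{lem:suffdecrease} together with the $L$-Lipschitz continuity of $\nabla G$. The inner objective $h \mapsto F(h) - \mu^k\langle h - h^k, s^k\rangle + \|h - h^k\|_2^2$ is proper, convex and coercive, so its constrained minimiser $h^{k+\frac{1}{2}}$ is characterised by $0 \in \partial F(h^{k+\frac{1}{2}}) - \mu^k s^k + 2(h^{k+\frac{1}{2}} - h^k) + N$, where $N$ is the normal cone of the mean-zero constraint set; its contribution vanishes by the argument recalled after \eqref{eq:abstractprob}, so there is an $r^{k+\frac{1}{2}} \in \partial F(h^{k+\frac{1}{2}})$ with $r^{k+\frac{1}{2}} = \mu^k s^k - 2(h^{k+\frac{1}{2}} - h^k)$; here $s^k = \nabla G(h^k)$, using differentiability of $G$ and, to reconcile the subgradient and normalisation steps, the scale invariance of $\partial G$ for the (essentially one-homogeneous) denominator. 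Adding and subtracting $\mu^k\nabla G(h^{k+\frac{1}{2}})$ then gives
\[
r^{k+\frac{1}{2}} - \mu^{k+\frac{1}{2}}\nabla G(h^{k+\frac{1}{2}}) = \mu^k\bigl(\nabla G(h^k) - \nabla G(h^{k+\frac{1}{2}})\bigr) + \bigl(\mu^k - \mu^{k+\frac{1}{2}}\bigr)\nabla G(h^{k+\frac{1}{2}}) - 2\bigl(h^{k+\frac{1}{2}} - h^k\bigr).
\]

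Taking norms, the last term contributes the summand $2$, and for the first term $\|\nabla G(h^k) - \nabla G(h^{k+\frac{1}{2}})\|_2 \le L\|h^{k+\frac{1}{2}} - h^k\|_2$ while $\mu^k \le \mu^0$ by Lemma \ref{lem:suffdecrease}. The middle term is where the work lies. I would first show that all iterates stay in a fixed ball: testing the inner minimisation with the feasible point $h = h^k$ turns the optimality gap, after using $F \ge 0$, Cauchy--Schwarz and $\|\nabla G(h^k)\|_2 \le \|\nabla G(0)\|_2 + L$, into a quadratic inequality that bounds $\|h^{k+\frac{1}{2}} - h^k\|_2$ and hence $\|h^{k+\frac{1}{2}}\|_2$; on that ball $\|\nabla G(h^{k+\frac{1}{2}})\|_2$ is then bounded by linear growth of $\nabla G$. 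I would then show $|\mu^k - \mu^{k+\frac{1}{2}}| \le \beta\|h^{k+\frac{1}{2}} - h^k\|_2$ for a constant $\beta$, because on that ball $h \mapsto F(h)/G(h)$ is Lipschitz, being a ratio of the locally Lipschitz convex functions $F$ and $G$ with $G$ bounded away from $0$ --- which is where the standing hypothesis $G(h^{k+\frac{1}{2}}) \neq 0$ enters, strengthened to a uniform positive lower bound along the sequence. Collecting the three estimates and setting, for instance, $C^{k+\frac{1}{2}} := \mu^k + \beta\,\|\nabla G(h^{k+\frac{1}{2}})\|_2 / L$ yields the claim by the triangle inequality; the subsequent global-convergence argument will only use that $\sup_k C^{k+\frac{1}{2}} < \infty$, which these bounds deliver.

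The main obstacle is exactly this auxiliary control --- the uniform bound on $\|\nabla G(h^{k+\frac{1}{2}})\|_2$ and the Lipschitz-type estimate for $\mu^k - \mu^{k+\frac{1}{2}}$ --- since neither follows from Lemma \ref{lem:suffdecrease} or the stated hypotheses alone: one must first pin down boundedness of the iterates and a uniform positive lower bound for $G$ along them. Everything else reduces to the triangle inequality and the Lipschitz continuity of $\nabla G$.
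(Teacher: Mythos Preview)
Your argument and the paper's start identically --- optimality condition of the inner problem, drop the normal-cone term, identify $s^k$ with $\nabla G(h^k)$ --- but diverge at the cross-term $\mu^k\nabla G(h^k) - \mu^{k+\frac{1}{2}}\nabla G(h^{k+\frac{1}{2}})$. You split it as $\mu^k\bigl(\nabla G(h^k)-\nabla G(h^{k+\frac{1}{2}})\bigr) + (\mu^k-\mu^{k+\frac{1}{2}})\nabla G(h^{k+\frac{1}{2}})$ and then control the second piece via boundedness of the iterates, a uniform lower bound on $G$, and local Lipschitz continuity of $F/G$; this is precisely the extra work you flag as the main obstacle, and it yields a constant $C^{k+\frac{1}{2}}$ that is not simply one of the $\mu$'s. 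The paper instead asserts in one line that
\[
\bigl\|\mu^k\nabla G(h^k) - \mu^{k+\frac{1}{2}}\nabla G(h^{k+\frac{1}{2}})\bigr\|_2 \;\le\; C^{k+\frac{1}{2}}\,\bigl\|\nabla G(h^{k+\frac{1}{2}}) - \nabla G(h^k)\bigr\|_2
\]
with $C^{k+\frac{1}{2}}\in\{\mu^k,\mu^{k+\frac{1}{2}}\}$, and then applies the $L$-Lipschitz bound once. That shortcut is tidier and gives an explicit constant, but the generic estimate $\|\alpha a - \beta b\|\le\max(\alpha,\beta)\|a-b\|$ it relies on is \emph{not} true for arbitrary vectors and nonnegative scalars (e.g.\ $a=e_1$, $b=2e_1$, $\alpha=1$, $\beta=2$ gives $3\not\le 2$), so without additional structure on $\nabla G$ the paper's step is at best heuristic. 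Your longer decomposition, together with the auxiliary bounds you outline, is the more careful route; the price is exactly the extra hypotheses (uniform positivity of $G$ along the sequence, boundedness of the iterates) that you correctly identify as not following from the stated assumptions alone.
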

\begin{proof}
This follows almost instantly from the optimality condition and the Lipschitz-continuity of $\nabla G$. We obtain 
\begin{align}
r^{k + \frac{1}{2}} - \mu^k \nabla G(h^k) = 2(h^k - h^{k + \frac{1}{2}}) \, ,\label{eq:optcond}
\end{align}
for $r^{k + \frac{1}{2}} \in \partial F(h^{k + \frac{1}{2}})$, as the optimality condition of the first sub-problem of \eqref{eq:algorithm} - note that we can omit the zero-mean constraint with a similar argumentation as earlier. Hence, we obtain
\begin{align*}
\| r^{k + \frac{1}{2}} - \mu^{k + \frac{1}{2}} \nabla G(x^{k + \frac{1}{2}}) \|_2 &= \| r^{k + \frac{1}{2}} - \mu^k \nabla G(x^k) + \mu^k \nabla G(x^k) - \mu^{k + \frac{1}{2}} \nabla G(x^{k + \frac{1}{2}}) \|_2\\
&= \| 2(h^k - h^{k + \frac{1}{2}}) + \mu^k \nabla G(x^k) - \mu^{k + \frac{1}{2}} \nabla G(x^{k + \frac{1}{2}}) \|_2\\
&\leq 2 \| h^{k + \frac{1}{2}} - h^k \|_2 + C^{k + \frac{1}{2}} \| \nabla G(x^{k + \frac{1}{2}}) - \nabla G(x^k) \|_2
\intertext{thanks to \eqref{eq:optcond} and the triangle inequality. The constant $C^{k + \frac{1}{2}}$ equals either $\mu^k$ or $\mu^{k + \frac{1}{2}}$, depending on whether $\| \mu^{k + \frac{1}{2}} \nabla G(x^{k + \frac{1}{2}}) - \mu^k \nabla G(x^k) \|_2 \leq \mu^{k + \frac{1}{2}} \| \nabla G(x^{k + \frac{1}{2}}) - \nabla G(x^k) \|_2$ or $\| \mu^{k + \frac{1}{2}} \nabla G(x^{k + \frac{1}{2}}) - \mu^k \nabla G(x^k) \|_2 \leq \mu^k \| \nabla G(x^{k + \frac{1}{2}}) - \nabla G(x^k) \|_2$. Using the Lipschitz-continuity of $G$ then yields}
\| r^{k + \frac{1}{2}} - \mu^{k + \frac{1}{2}} \nabla G(x^{k + \frac{1}{2}}) \|_2 &\leq 2 \| h^{k + \frac{1}{2}} - h^k \|_2 + C^{k + \frac{1}{2}} L \| h^{k + \frac{1}{2}} - h^k \|_2\\
&= (2 + C^{k + \frac{1}{2}}L) \| h^{k + \frac{1}{2}} - h^k \|_2 \, .
\end{align*}
This concludes the proof.
\end{proof}
Under the additional assumption that the function $F/G$ satisfies the Kurdyka-\L ojasiewicz property (cf. \cite{lojasiewicz1963,kurdyka1998gradients}) we can now use Lemma \ref{lem:suffdecrease} and Lemma \ref{lem:subgradbound} to show finite length of the iterates \eqref{eq:algorithm} similar to \cite[Theorem 1]{PALM}, following the general recipe of \cite[Section 3.2]{PALM}. Note that we further have to substitute $h^{k + \frac{1}{2}} = \| h^{k + \frac{1}{2}} \|_2 h^{k + 1}$ in order to also show global convergence of the normalised iterates.

\section{Reproducing Standard Sparse Penalties}

In this section we want to demonstrate that we are able to reproduce standard first- and second-order total variation regularisation penalties in 1D.

\begin{figure}[h]
\centering
\includegraphics[height=6cm]{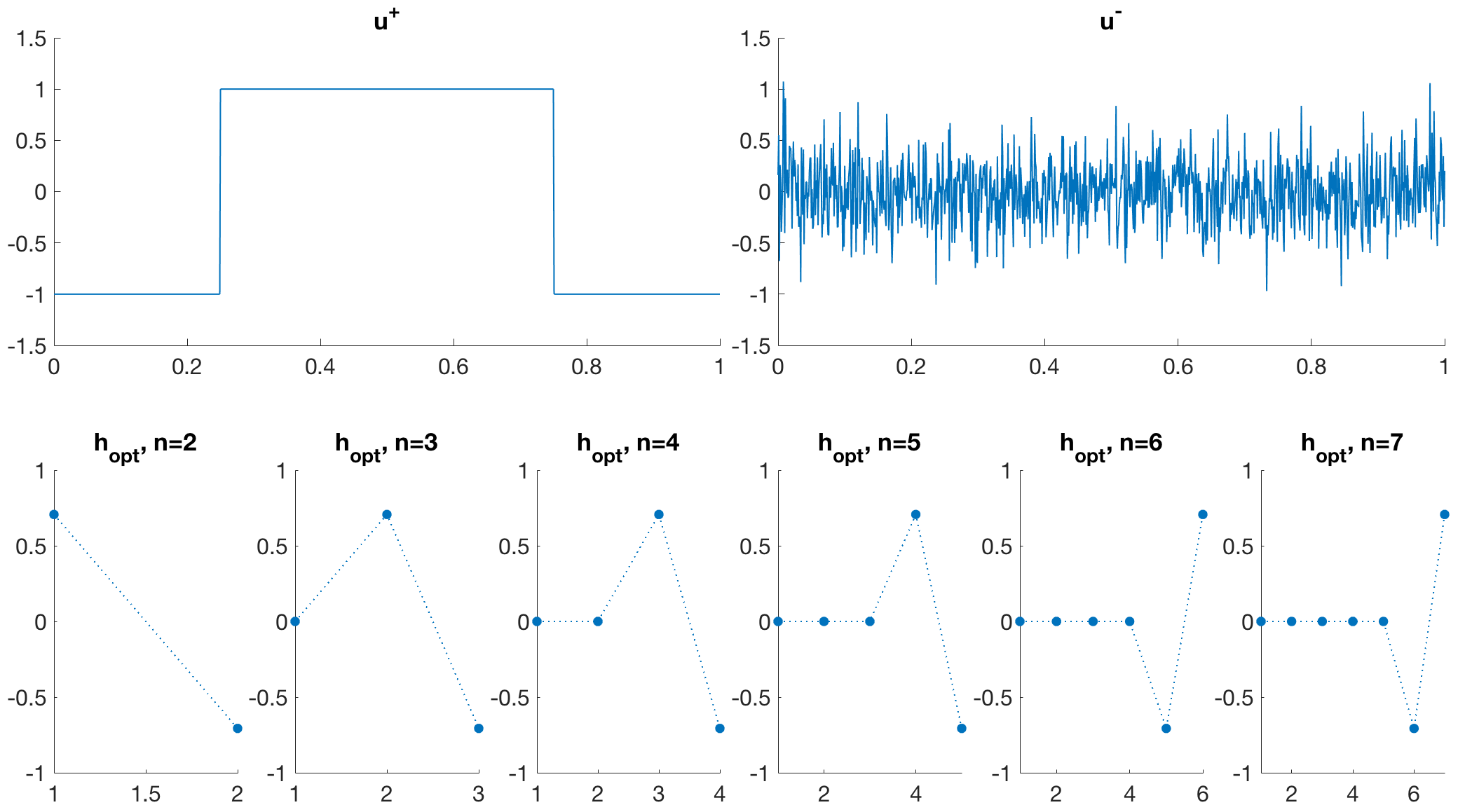}
\caption{Optimal filters in 1D setting for different sizes of $h$. Top: $u^+$ (left) and $u^-$ (right). Bottom: Optimal filters choosing $n \in \{ 2, 3,\dots, 7 \}$ (from left to right).}
\label{fig:differentsizes1D}
\end{figure}

Figure \ref{fig:differentsizes1D} shows results for different sizes of the kernel $h$. In all experiments the filter function is indeed resembling a two-point stencil functioning as a finite differences discretisation of TV. This is expected as the desired input function is a TV Eigenfunction.

\begin{figure}[h]
\centering
\subfigure[Top: $u^+_1$, $u^+_2$, $u^+_3$. Bottom: $u^-_1$, $u^-_2$, $\hat{h}$ ($n=5$).]{\includegraphics[height=4.75cm]{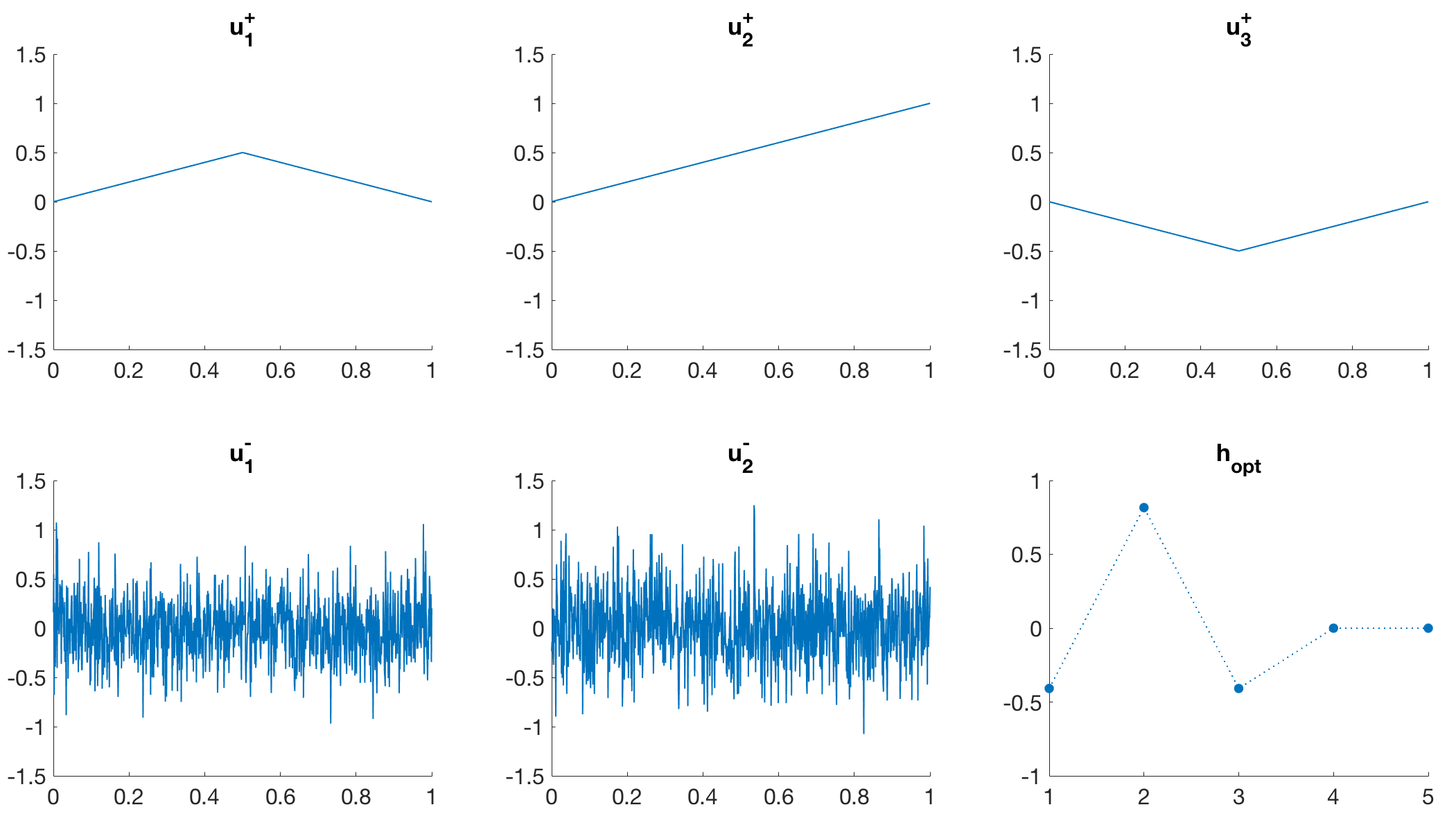}}\hfill\subfigure[Reconstruction with $\eta = 3.5$.]{\includegraphics[height=4.75cm]{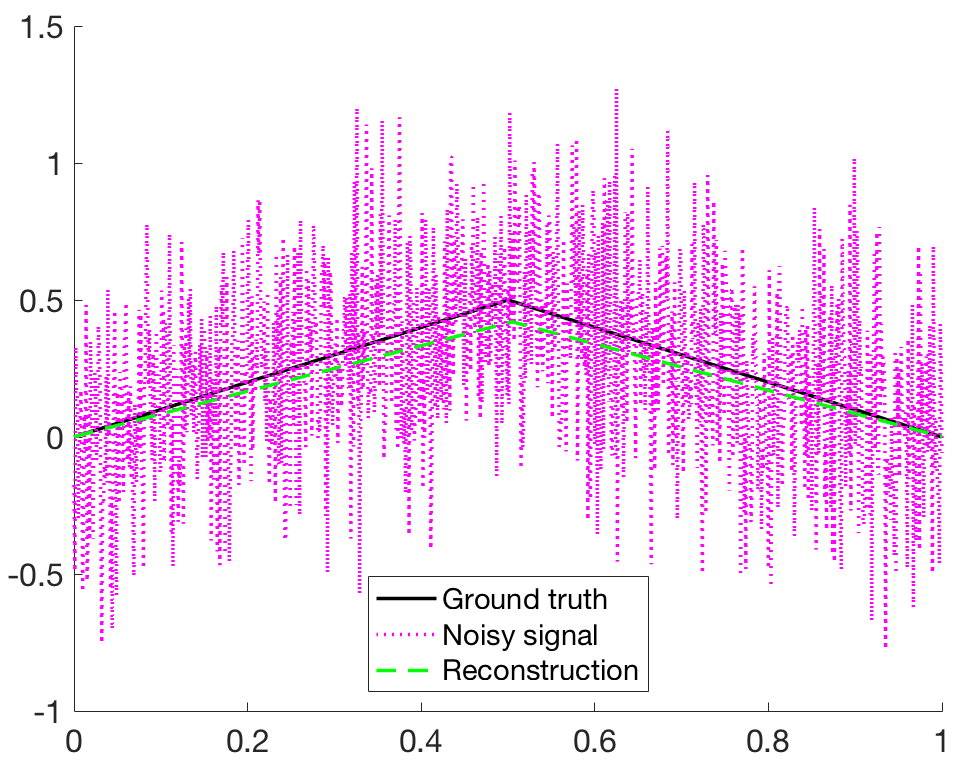}}
\caption{1D result for multiple piecewise-linear input functions $u^+_i$ and noisy signals $u^-_j$.}
\label{fig:1Dsecond}
\end{figure}

In Figure \ref{fig:1Dsecond} (a) we can reproduce a filter resembling a second-order derivative. This is indeed expected as we choose three different piecewise-linear functions as desired input signals. The reconstruction in (b) is performed according to \eqref{eq:reconstruction}.

In a more sophisticated example, we mimic a TV-TV$^2$ infimal convolution model, where we are given a known decomposition $u^+ = v+w$, i.e.\ $u^+$ consists of a smooth part $v$ and a piecewise constant part $w$. When minimising
\begin{equation*}
\frac{\Vert h_1 \ast v \Vert_1 + \Vert h_2 \ast w \Vert_1}{\Vert h_1 \ast u^- \Vert_1 + \Vert h_2 \ast u^- \Vert_1}\,,
\end{equation*}
with respect to $h_1$ and $h_2$, we indeed obtain two filters resembling a second- and first-order derivative, respectively (cf.\ Figure \ref{fig:1Dinfconv}).

\begin{figure}[h]
\centering
\includegraphics[height=3cm]{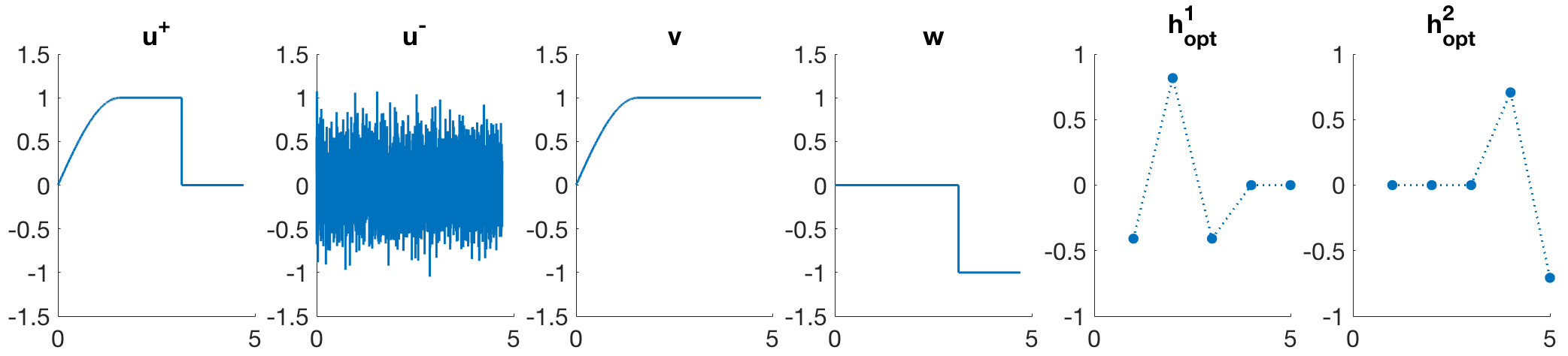}
\caption{1D infimal convolution result for two filters ($n=5$) and known decomposition $u^+ = v + w$. From left to right: $u^+$, $u^-$, $v$, $w$, $\hat{h}_1$, $\hat{h}_2$.}
\label{fig:1Dinfconv}
\end{figure}

\section{Novel Sparse Filters}\label{sec:novfilters}

In this section we derive a new family of regularisers not necessarily related to derivatives in contrast to the total variation. They have the interesting property of reconstructing piecewise-constant both vertical and horizontal lines in the corresponding null-spaces. Consequently, we are able to almost perfectly reconstruct those types of images and obtain better denoising results compared to standard TV denoising. In \cite{G}, a definition of desirable features of a regulariser, which is adapted for a specific type of images, is given. It is suggested that in the ideal case, all instances belonging to the desired clean class should be in the null-space of the regulariser (see \cite[Section 2]{G} and also compare \cite{BKC}). This is exactly what we obtain in the following.

\begin{figure}[h]
\centering
\subfigure[Optimal filter and reconstruction for thick vertical stripes.]{\includegraphics[height=4.25cm]{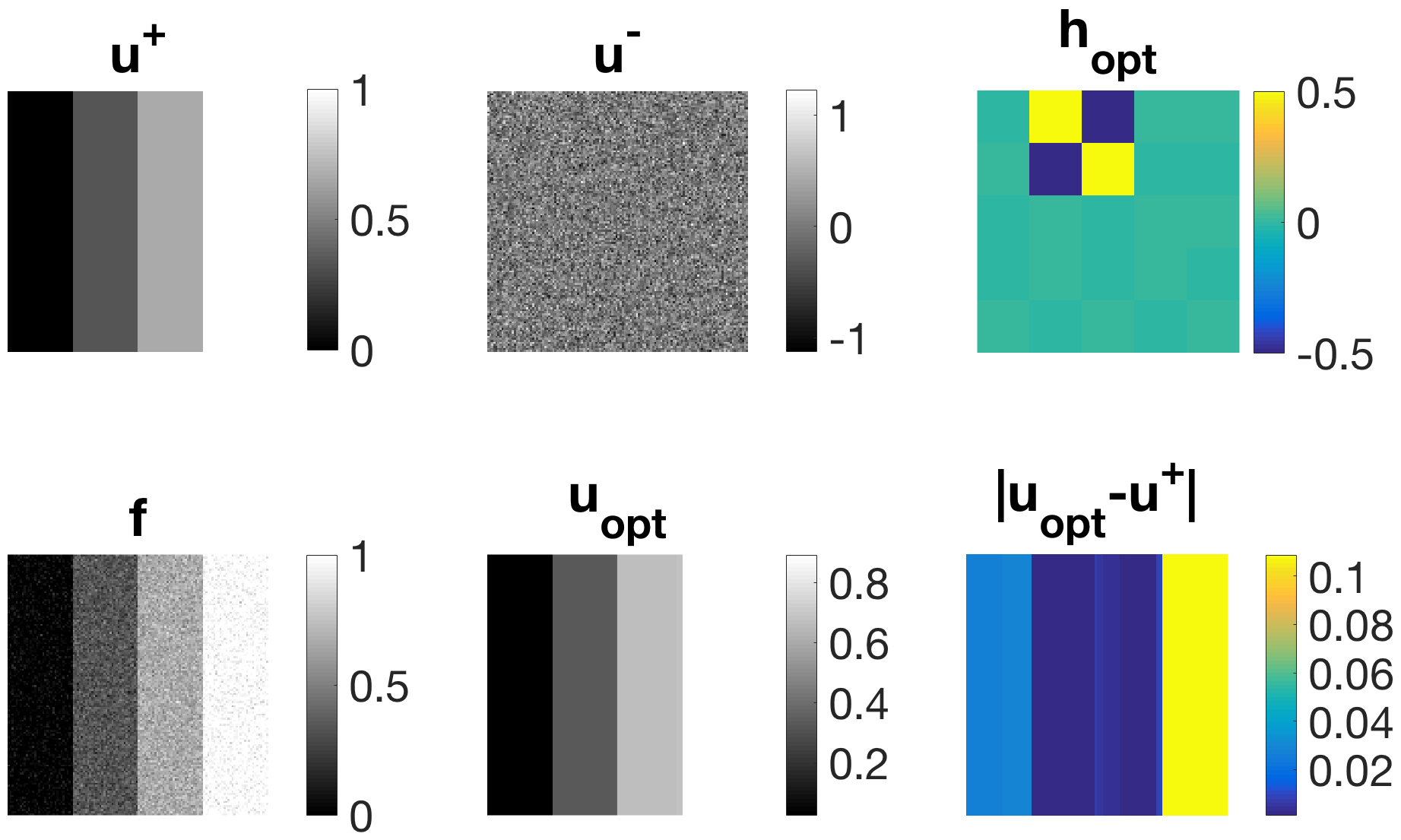}}\hfill\subfigure[Optimal filter and reconstruction for thin horizontal stripes.]{\includegraphics[height=4.25cm]{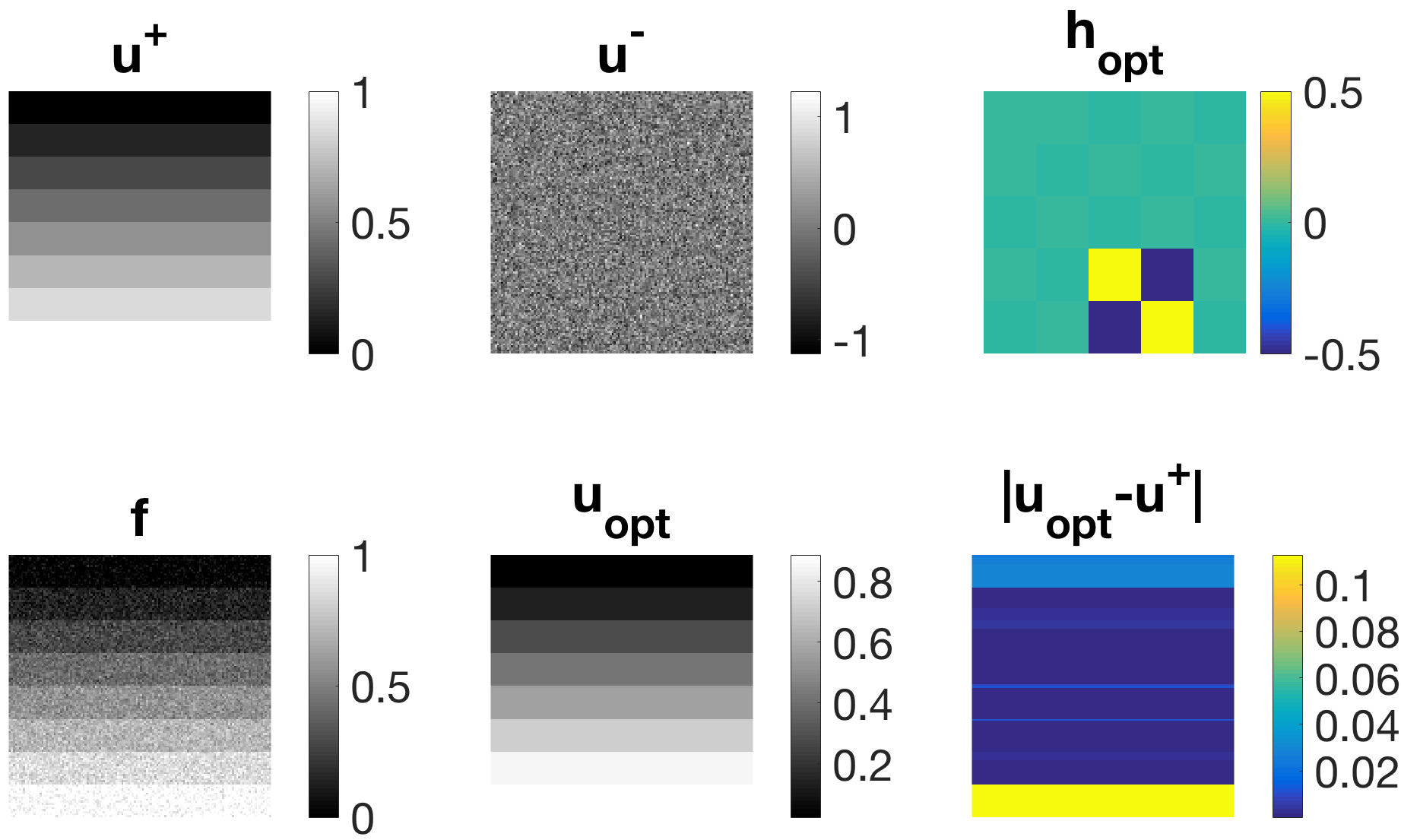}}
\caption{Experiments for 2D piecewise-constant images. Top: $u^+$ (ground truth for reconstruction), $u^-$ (pure Gaussian noise, $\sigma = 0.3$), $\hat{h}$. Bottom: Noisy image $f$ (Gaussian noise added to $u^+$, $\sigma^2 = 0.005$), reconstruction $\hat{u}$ ($\eta = 1$), absolute difference between reconstruction and ground truth.}
\label{fig:2Dpc_diagonal}
\end{figure}

In Figure \ref{fig:2Dpc_diagonal}, a new family of diagonal regularisers is established for piecewise-constant images with stripes in both vertical and horizontal direction. For denoising purposes, those filters yield superior results over TV denoising as they additionally avoid loss of contrast, which would occur when performing TV denoising for these examples.
The reason for that is simply that if we consider a $2 \times 2$ diagonal-shaped filter $h = [1,-1;-1,1]$ in the variational problem \eqref{eq:reconstruction}, we can expect both horizontal and vertical stripes to be in its null-space. Therefore, we obtain perfect shape preservation for any regularisation parameter $\eta$. Note that 1-pixel-thick stripes are in the null-space as well (cf.\ Figure \ref{fig:advantages} (left), where $f - u = 0$).

\begin{figure}[h]
\centering
\includegraphics[height=3.75cm]{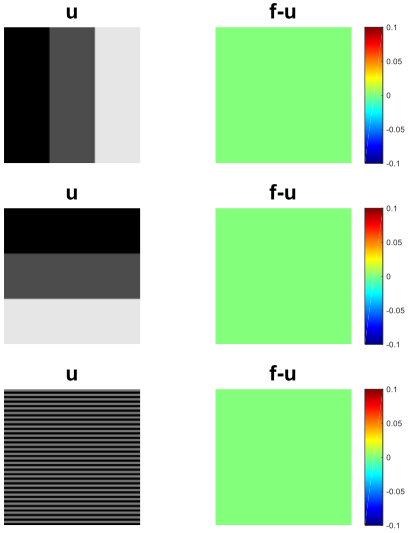}\hfill\includegraphics[height=3.75cm]{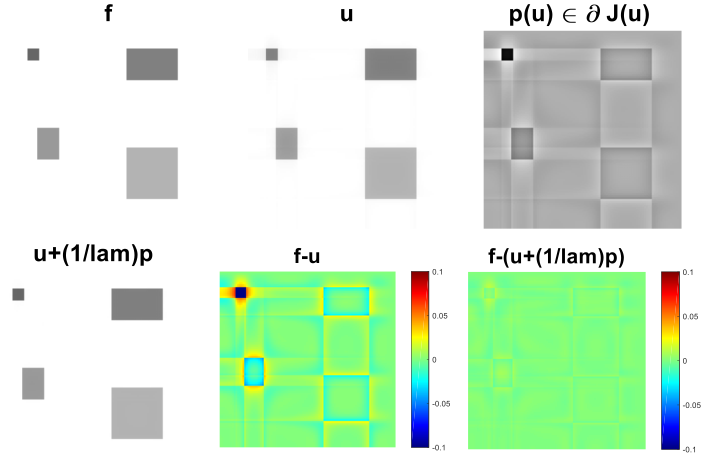}\hfill\includegraphics[height=3.75cm]{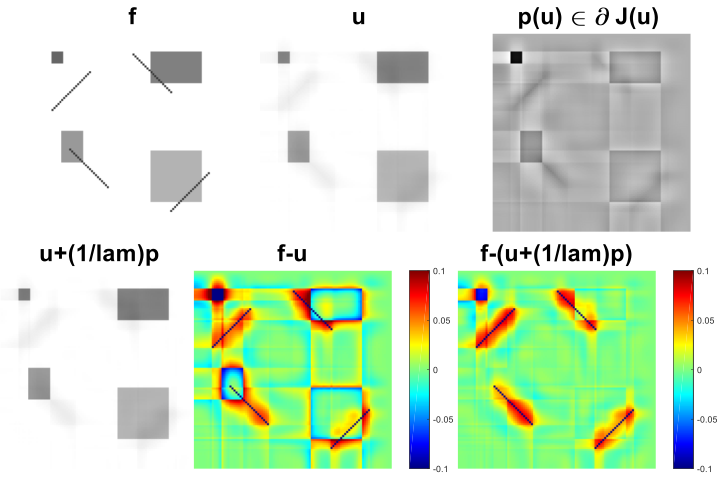}
\caption{The stripe images are in the novel diagonal filter's null-space (left). Rectangles can be well preserved (centre) and thin diagonal structures can be removed (right).}
\label{fig:advantages}
\end{figure}

In Figure \ref{fig:advantages} (centre) we can observe that rectangles are also well preserved with this filter. For better performance, however, one could additionally use a contrast preserving mechanism such as Bregman iteration or as in our case low-pass filtering. On the right, Figure \ref{fig:advantages} illustrates how the diagonal filter is capable of removing diagonal structures in an image.

However, for denoising tasks this filter is not optimal. For images that do not consist exclusively of horizontal or vertical stripes it produces undesired artefacts in the reconstruction such as additional thin stripes in horizontal and vertical direction. Also, we obtain the same filter despite having used quite different training data (rotated by 90\degree) in both experiments.

As a consequence, we are going to focus in the following on distinguishing between different shapes or scales.

\begin{figure}
\centering
\subfigure[$J(u^+;\hat{h})=286$, $J(u^-;\hat{h})=586$]{\includegraphics[height=4.25cm]{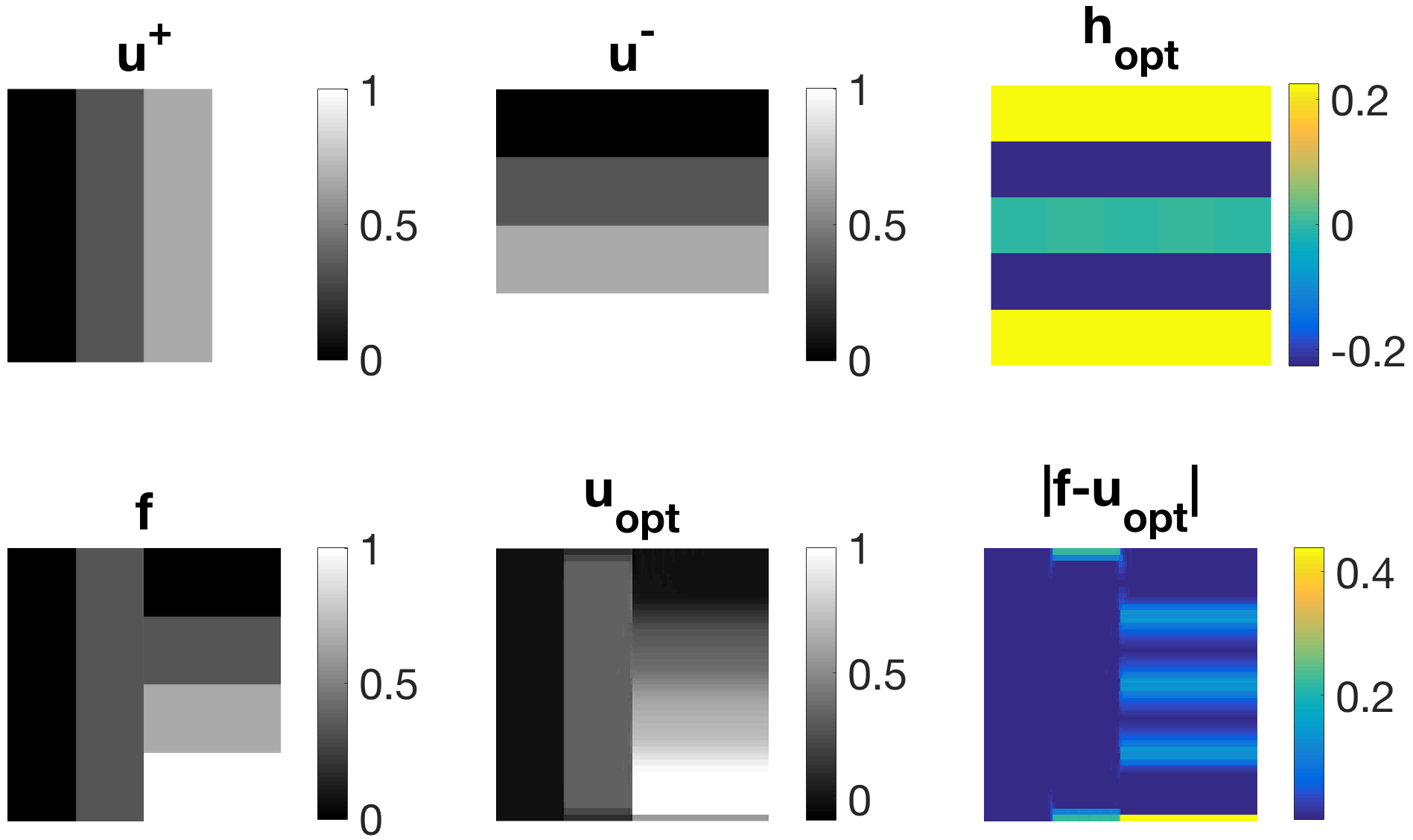}}\hfill
\subfigure[$J(u^+;\hat{h})=87$, $J(u^-;\hat{h})=186$]{\includegraphics[height=4.25cm]{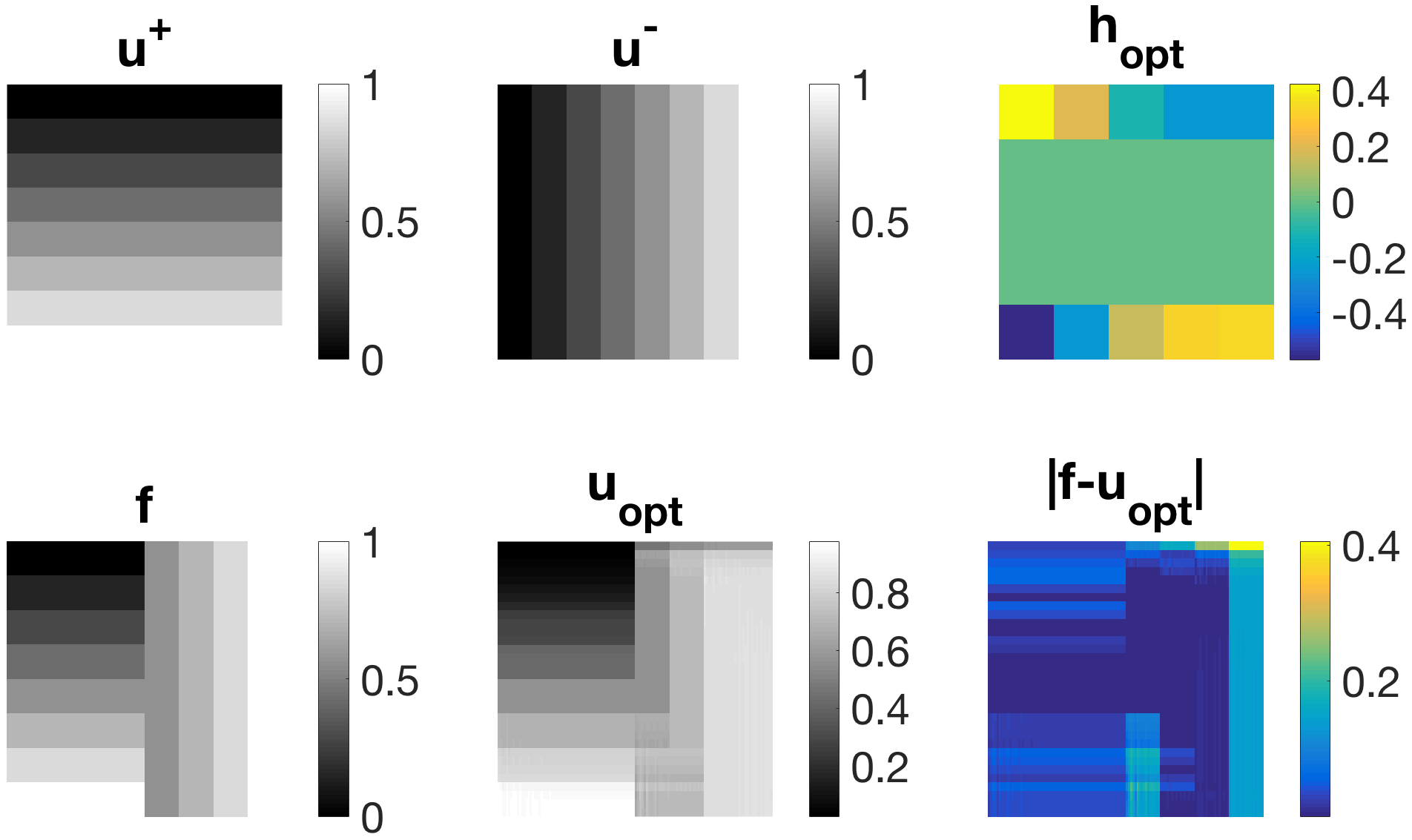}}\\
\subfigure[$J(u^+;\hat{h})=23$, $J(u^-;\hat{h})=1474$]{\includegraphics[height=4.25cm]{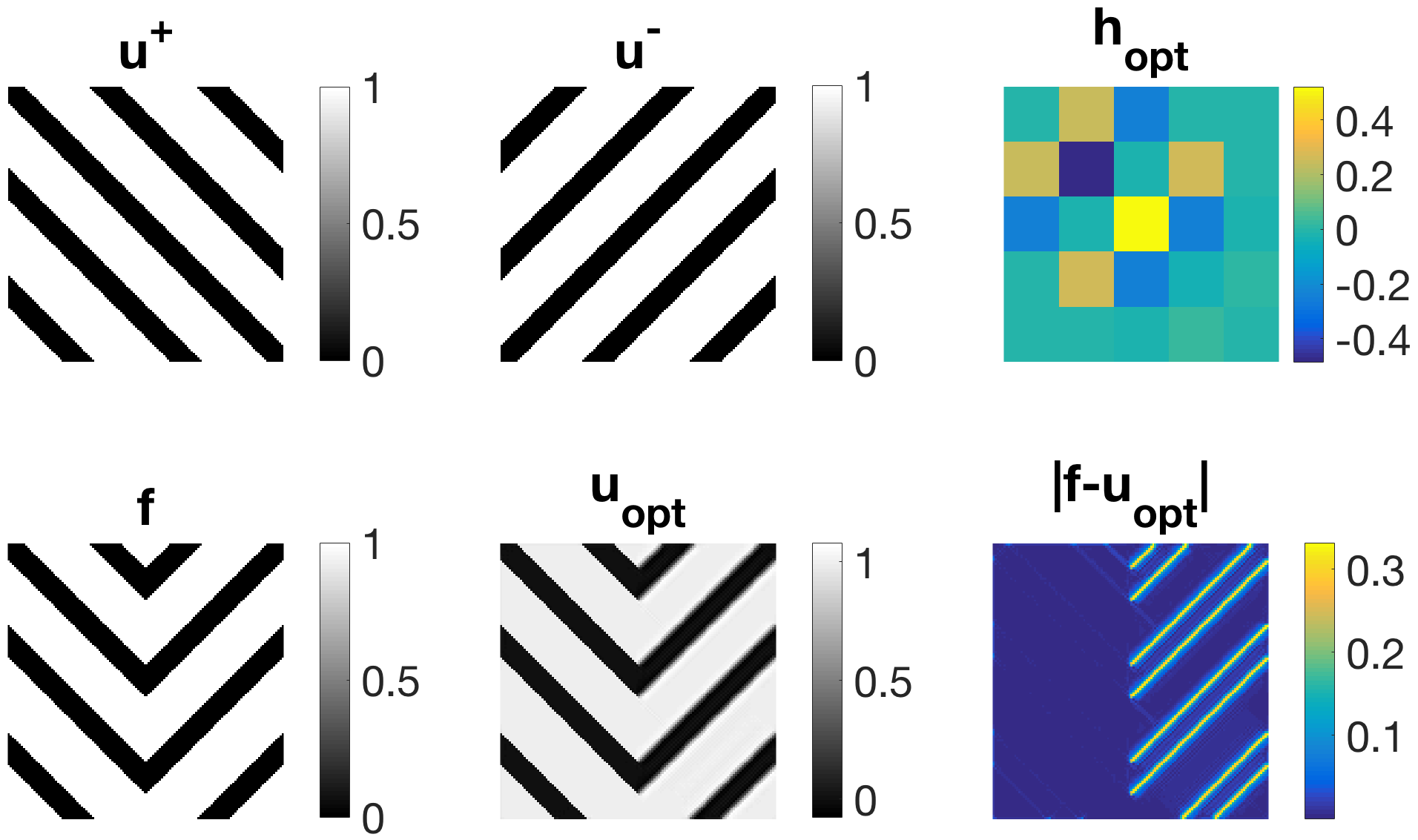}}\hfill
\subfigure[$J(u^+;\hat{h})=178$, $J(u^-;\hat{h})=11220$]{\includegraphics[height=4.25cm]{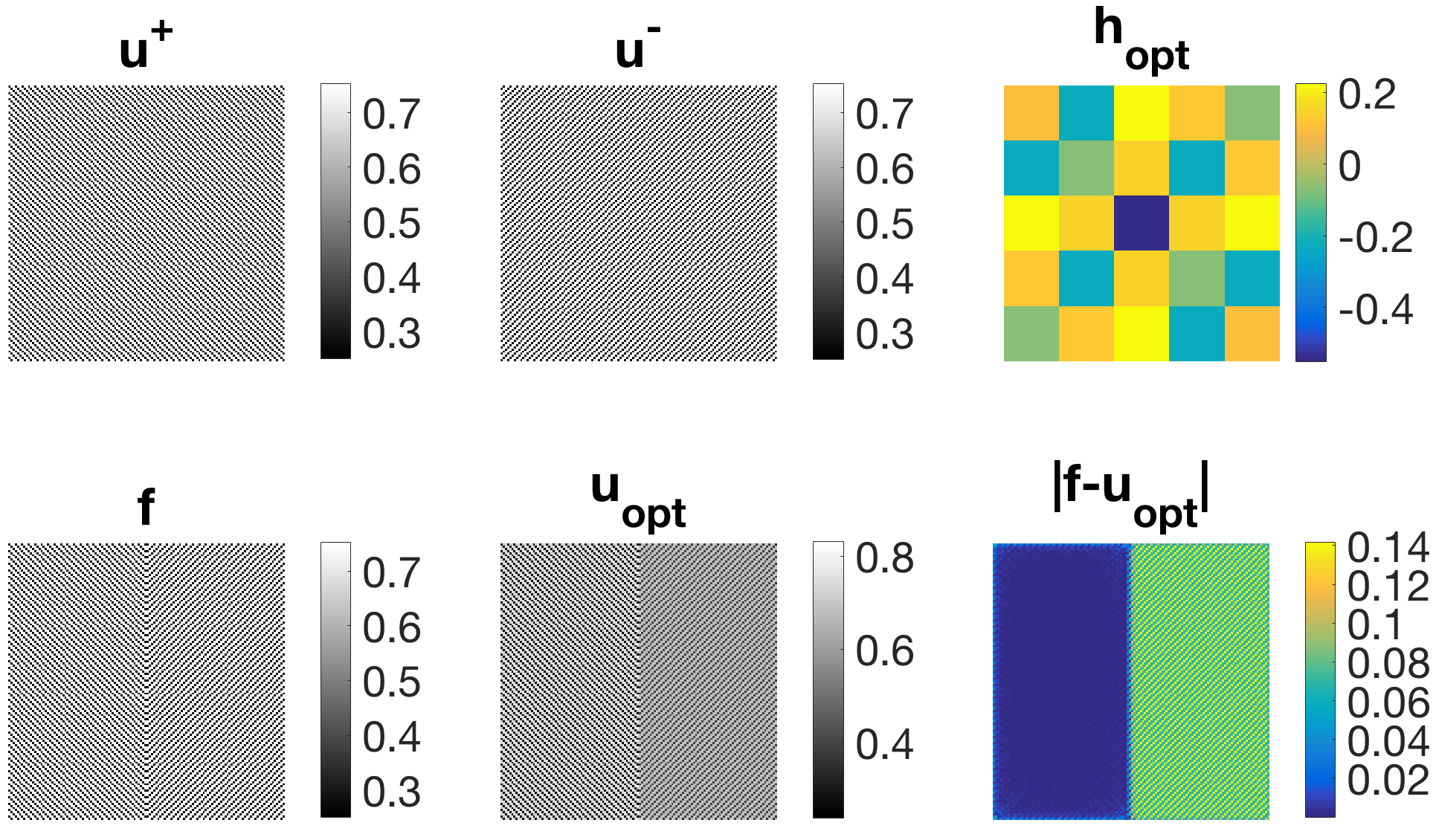}}\\
\subfigure[$J(u^+;\hat{h})=129$, $J(u^-;\hat{h})=277$]{\includegraphics[height=4cm]{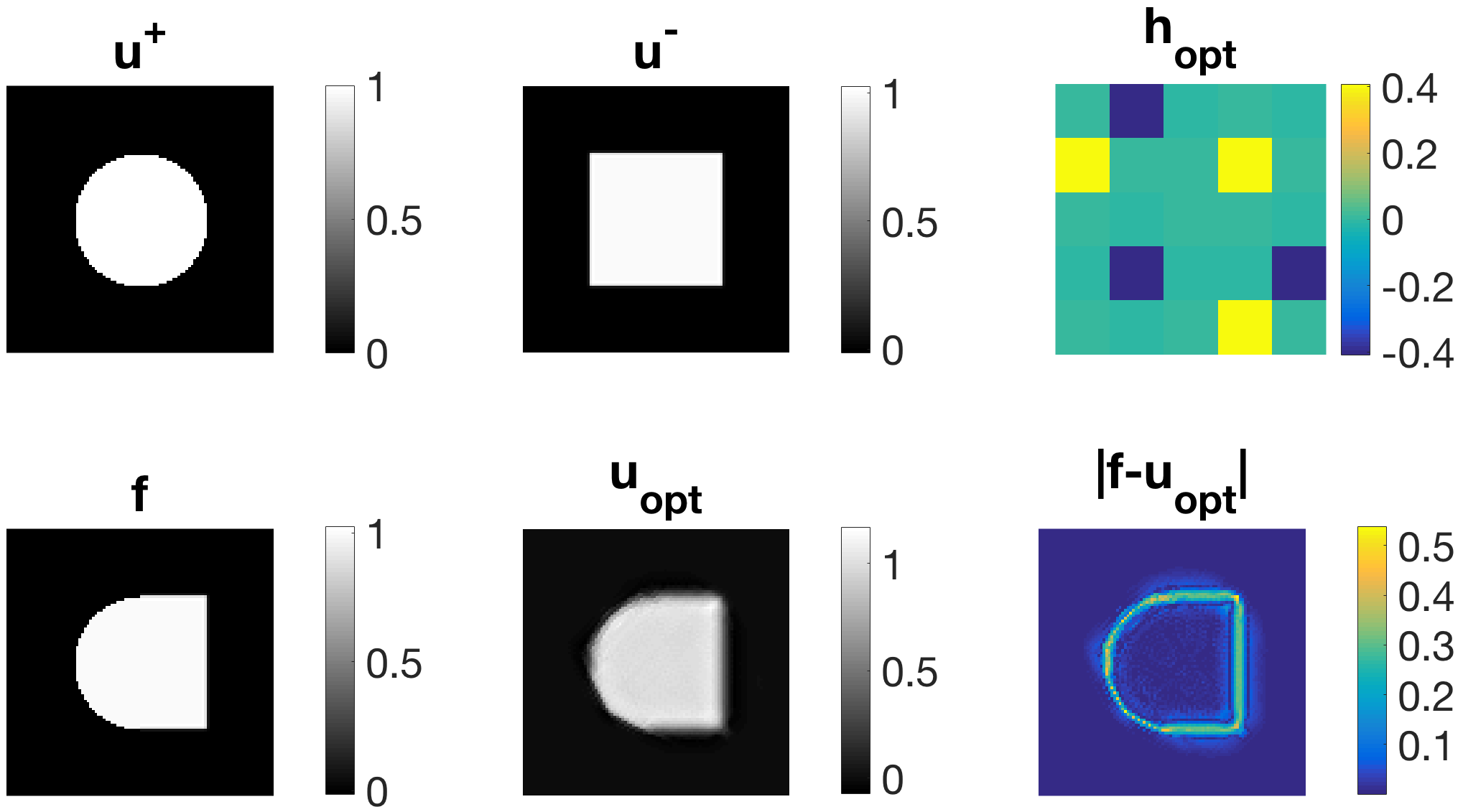}}\hfill
\subfigure[$J(u^+;\hat{h})=1$, $J(u^-;\hat{h})=63$]{\includegraphics[height=4cm]{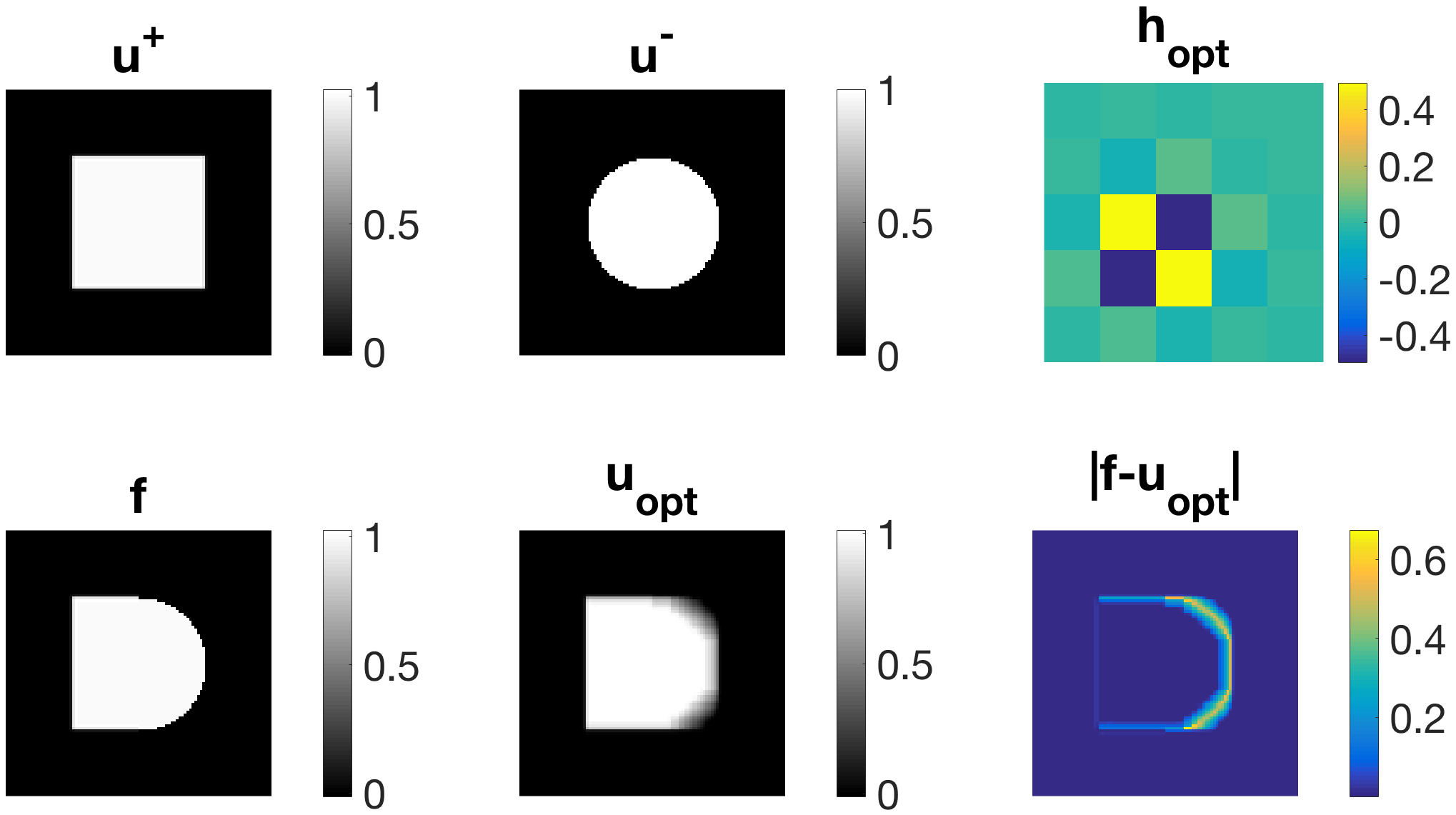}}
\caption{Our learning framework facilitates distinguishing shapes, angles and scales.}
\label{fig:shapes}
\end{figure}

Figure \ref{fig:shapes} shows a variety of experiments aiming at finding an optimal filter function $\hat{h}$, which favours the specific texture, orientation or scale in input image $u^+$ and disfavours the one present in $u^-$. Again, we perform reconstruction according to \eqref{eq:reconstruction} choosing $\eta = 1$ and $\sigma^2 = 0.005$, where the given image $f$ is a noise-free combination of $u^+$ and $u^-$.
We state the functional evaluations of $J(u^+;\hat{h})$ and $J(u^-;\hat{h})$ below the respective figures. It can be observed that the former are in most cases significantly smaller than the latter, confirming the usefulness of model \eqref{eq:basic} and Algorithm \eqref{eq:algorithm}.
In (a), we can clearly see that the left-hand-side can be almost perfectly reconstructed in $\hat{u}$ while undesired artefacts are occurring for the unfavourable horizontal stripes. Similar results can be observed in (b), where the optimal filter is sparser than the one in (a).
In (c), we have diagonal stripes in different angles as input images. Again, the left-hand side is almost perfectly reconstructed whereas the stripes on the right-hand side appear blurred. In (d), the diagonal stripes are only one pixel thick and hence the appearance of the filter changes significantly. The regulariser is able to reconstruct the left-hand side of $f$ very well.
We exchange $u^+$ and $u^-$ in (e) and (f). First, the circle is the desired and the square is the undesired input signal. The opposite case holds true for example (f). Here, again the diagonal-shaped filter performs best and blurs the circular structure enforcing edges in vertical and horizontal direction.

\begin{figure}
\centering
\subfigure[Increasing the filter size to $7 \times 7$ enables discovery of a filter that almost assumes a circular shape itself. $J(u^+;\hat{h})=123$, $J(u^-;\hat{h})=270$, $\eta = 0.1$, $\sigma^2 = 0.005$]{\includegraphics[height=4cm]{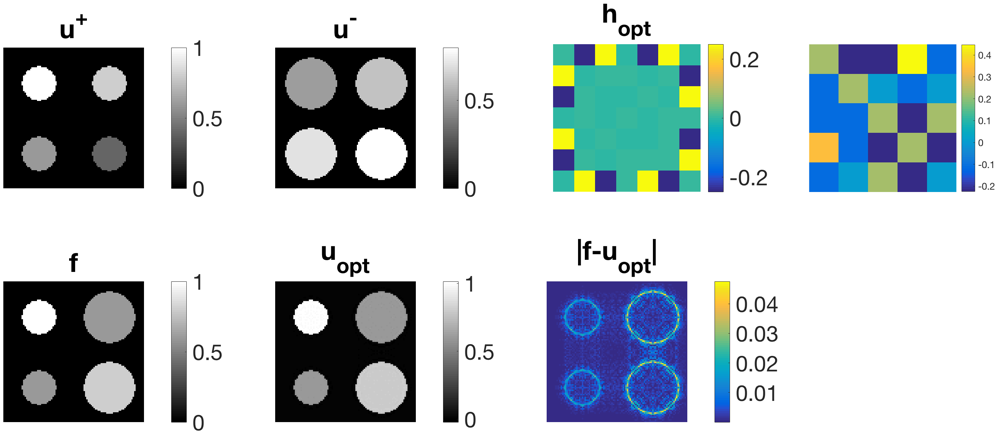}}\hfill\subfigure[Denoising experiment for multiple input images. $\eta = 1$, $\sigma^2 = 0.005$]{\includegraphics[height=4cm]{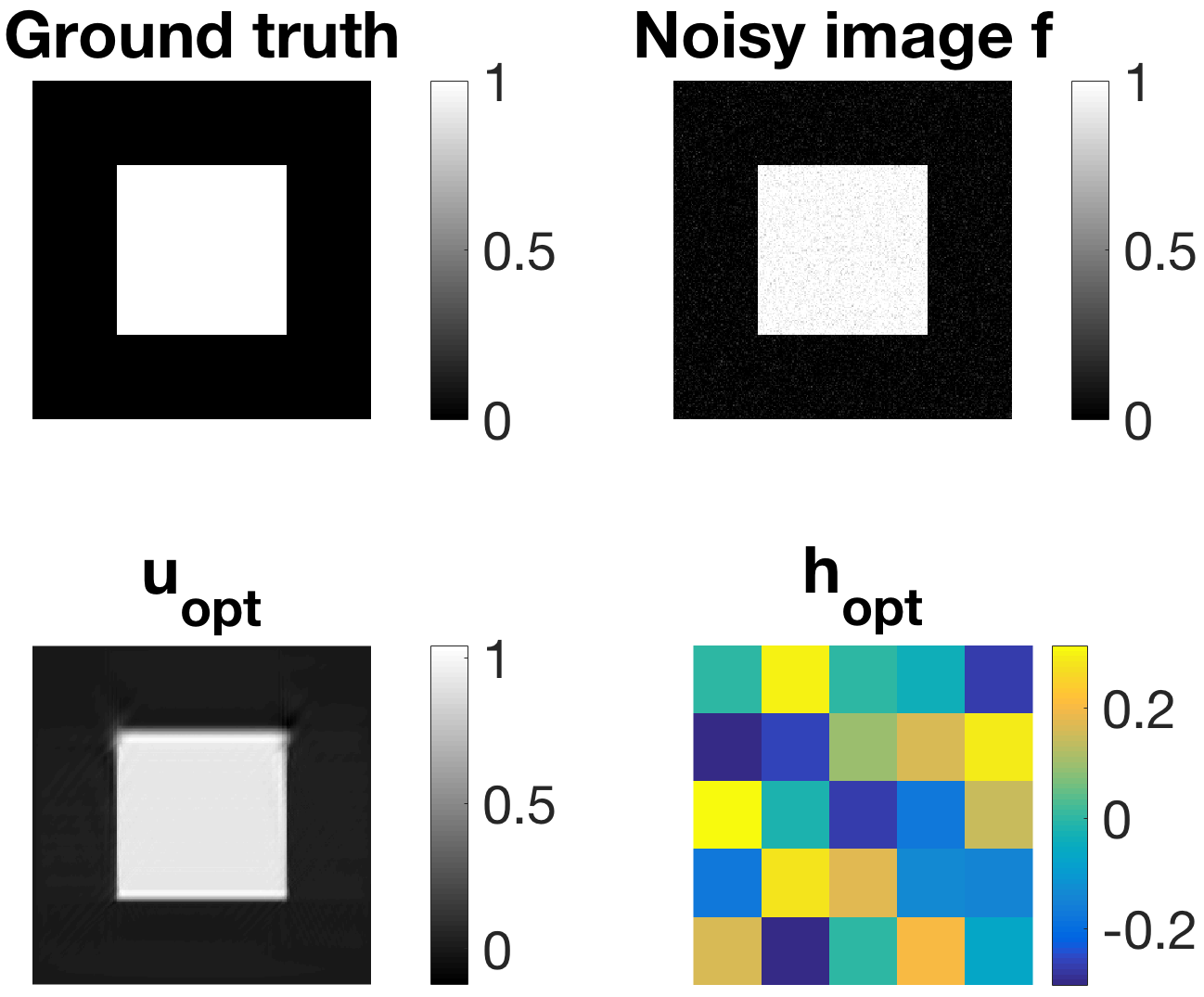}}
\caption{Further experiments: Increasing filter size and number of input images.}
\label{fig:experiments}
\end{figure}

\begin{figure}[h]
\centering
\includegraphics[height=2.5cm]{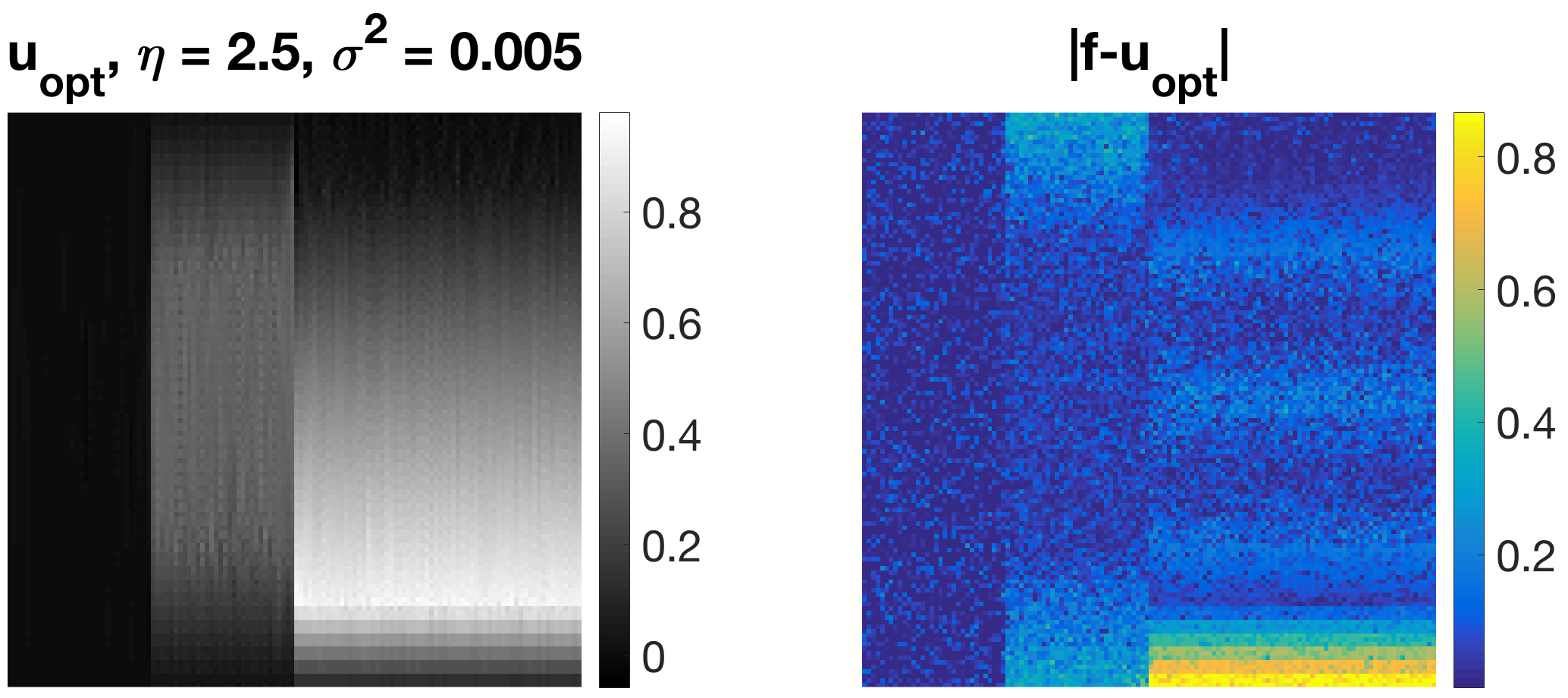}\hspace{1cm}\includegraphics[height=2.5cm]{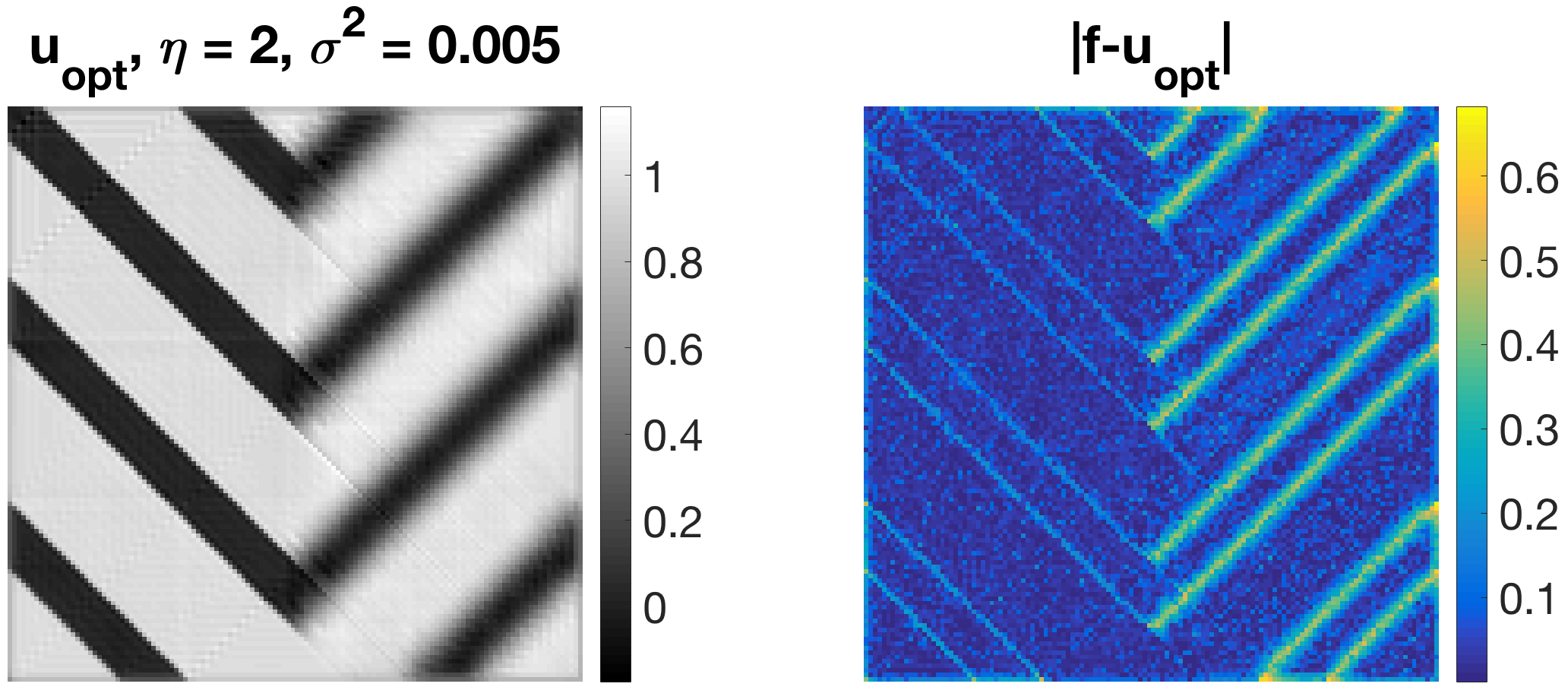}
\caption{Denoising performance of filters in Figure \ref{fig:shapes} (a) and (c).}
\label{fig:experiments2}
\end{figure}

Sometimes it is necessary to find suitable filters by increasing their size, as can be seen in Figure \ref{fig:experiments} (a). On the right-hand side we can see the optimal filter calculated assuming a size of $5 \times 5$. By increasing the size slightly to $7 \times 7$, we obtain a much more reasonable filter being able to recover the circles. The filter itself almost resembles a circle with a radius of three pixels.
In (b), given the ground truth image on the top left and four rotated versions of it in angles between 0 and 45 degrees as favourable input as well as five noise signals as negative input, we obtain the filter on the bottom right, which performs surprisingly well at denoising the image on the top right.

In Figure \ref{fig:experiments2}, convincing denoising results can be achieved for examples (a) and (c) in Figure \ref{fig:shapes}, adding noise to the ground truth image $f$ and using the calculated optimal filter. One can clearly see that the structures on the left-hand sides are denoised better.

We would like to remark that the setup with one filter is indeed rather simple and cannot mimic 2D differential-based filters like TV or alike, but therefore it is even more surprising that the filters presented above perform really well.

\section{Conclusions and Outlook}

Starting from the model in \cite{PAMM}, we derived a more generalised formulation suitable for minimisation with respect to multi-dimensional filter functions. In addition, our flexible framework allows for multiple desired and undesired input signals.

We were able to reproduce different common first- and second-order regularisers such as TV and TV$^2$ in the 1D case. Furthermore, we created a new family of non-derivative-based regularisers suitable for specific types of images. Also, we showed that specific shapes such as diagonal stripes can be eliminated while applying such parametrised regularisers. In addition, we believe that our learning approach is suitable for distinguishing between different shapes, scales and textures. A great advantage and novelty is that we are able to include both favourable and unfavourable input signals in our framework.

Regarding numerical implementation, we would like to stress that for computational simplicity in combination with the CVX framework we have only considered Dirichlet boundary conditions so far, but will use different boundary conditions (like the more suitable Neumann boundary conditions) in the future. 

We further assume that the ansatz of filter functions respectively convolutions as parametrisations for the regularisers is too generic especially for denoising tasks. It will be interesting to look into dictionary-based sparsity approaches, and to then learn basis functions with the presented quotient model.

Moreover, future work might include applications in biomedical imaging such as reconstruction in CT or MRI as well as denoising or object detection in light microscopy images. In \cite{Zeune}, the authors present a multiscale segmentation method for circulating tumour cells, where they are able to detect cells of different sizes. Using our model, we believe that shape or texture priors incorporated in sparsity-based regularisers could be well improved. One possible application could be mitotic cell detection (cf.\ \cite{MitosisAnalyser}).

\section{Acknowledgements}

MB acknowledges support from the Leverhulme Trust early career fellowship "Learning from mistakes: a supervised feedback-loop for imaging applications" and the Newton Trust. GG acknowledges support by the Israel Science Foundation (grant 718/15). JSG acknowledges support by the NIHR Cambridge Biomedical Research Centre. CBS acknowledges support from Leverhulme Trust project ’Breaking the non-convexity barrier’, EPSRC grant ’EP/M00483X/1’, EPSRC centre ’EP/N014588/1’, the Cantab Capital Institute for the Mathematics of Information, and from CHiPS (Horizon 2020 RISE project grant).

\subsubsection*{Data Statement.} The corresponding MATLAB\textsuperscript{\textregistered} code is publicly available on Apollo - University of Cambridge Repository (\url{https://doi.org/10.17863/CAM.8419}).

%\bibliographystyle{unsrt}
%\bibliography{arXiv}

\end{document}